\documentclass[11pt,reqno]{amsart}

\usepackage[T1]{fontenc}
\usepackage{lmodern}
\usepackage[letterpaper,margin=1in]{geometry}
\usepackage{amssymb,mathtools}
\usepackage[hidelinks]{hyperref}
\usepackage[capitalize]{cleveref}

\newtheorem{theorem}{Theorem}
\newtheorem{proposition}[theorem]{Proposition}
\newtheorem{lemma}[theorem]{Lemma}
\theoremstyle{definition}
\newtheorem{remark}[theorem]{Remark}
\newtheorem*{acknowledgements}{Acknowledgements}

\newcommand{\C}{\mathbb{C}}
\newcommand{\R}{\mathbb{R}}
\newcommand{\cH}{\mathcal{H}}
\newcommand{\pbar}{\partial_{\bar z}}
\DeclareMathOperator{\Tr}{Tr}

\title{Planar Dirichlet eigenvalue bounds from a trace identity}
\author{Romain Speciel}
\address{Department of Mathematics, Stanford University, 450 Jane Stanford Way, Building 380, Stanford, CA 94305-2125, USA}
\email{rspeciel@stanford.edu}
\date{October 4, 2026}
\subjclass[2020]{Primary 35P15; Secondary 47B47, 47B20}
\keywords{P\'olya's conjecture, Dirichlet eigenvalues, Berezin--Li--Yau inequality, trace formula}

\begin{document}

\begin{abstract}
	We prove an asymptotically sharp lower bound for the eigenvalues $\lambda_k$ of the Dirichlet Laplacian on bounded open sets in the plane with area $A>0$. The proof employs a trace identity for the commutator between a certain multiplication operator and its adjoint. As a corollary, we obtain $(2/3)\cdot 4\pi k/A\leq \lambda_k$ for every $k$, which is two thirds of the bound conjectured by P\'olya.
\end{abstract}

\maketitle

Fix a bounded open set $\Omega\subset\R^2$ with area $A>0$, and consider the Dirichlet Laplacian $\Delta$ on $\Omega$ with eigenvalues $0<\lambda_1\leq\lambda_2\leq\dots\to\infty$. In 1954, P\'olya conjectured \cite[pp.~51--53]{PolyaConjecture} that
\begin{equation}
	\frac{4\pi k}{A}\leq\lambda_k,
\end{equation}
and, in 1961, he went on to prove this bound for domains which tile the plane \cite{Polya}. When initially stating the problem, P\'olya added \cite[p.~52]{PolyaConjecture}
\begin{quote}
    ``This is a challenging conjecture. As the shape of the region [$\Omega$] can arbitrarily vary and [$k$] ranges over all the integers $1, 2, 3, \dots$, the conjecture covers an immense variety of particular cases of which not too much is known: the conjecture could be exploded by a numerical result concerning any of these particular cases.''
\end{quote}
The final clause of this remark makes this problem a particularly attractive candidate for recently-in-fashion AI-assisted counterexample searches \cite{EpochAI}. Nevertheless, it has thus far withstood such scrutiny. Recent results by Filonov, Levitin, Polterovich and Sher have in fact verified the conjecture for discs and circular sectors \cite[Theorems~1.2 and~1.12]{FLPSBalls} and for annuli \cite[Theorem~1.1]{FLPSAnnuli}, though the general case remains open.

If P\'olya's conjecture holds, the average of the first $k$ eigenvalues, which we denote by $m_k=(\lambda_1+\dots+\lambda_k)/k$, would satisfy $4\pi k/A\leq2m_k$.
This weaker estimate, known as the Berezin--Li--Yau inequality, was proven in \cite[Theorem~1]{LiYau}. In particular, since $m_k\leq\lambda_k$, it gives $ c\cdot 4\pi k/A\leq \lambda_k$ with $c=1/2$, whereas P\'olya's conjecture asserts this with $c=1$. Wang and Xie recently showed that one may take $c=0.5383\ldots$ \cite[Theorem~4.2]{WangXie}. In this note, we shall prove the following general estimate, from which it follows that the lower bound holds with $c=2/3$.

\begin{theorem}\label{thm}
	Let $\Omega\subset \R^2$ be a bounded open set with area $A>0$. Denote by $\lambda_k$ the $k$th eigenvalue of the Dirichlet Laplacian and by $m_k$ the average of the first $k$ such eigenvalues. For every $k\geq1$, we have
	\begin{equation}\label{eq:evalue_bound}
		\frac{2m_k}{4m_k-\lambda_k}\cdot\frac{4\pi k}{A}\leq\lambda_k.
	\end{equation}
\end{theorem}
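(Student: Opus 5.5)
The plan is to turn P\'olya's problem into a bookkeeping statement about the compression of multiplication by $z=x_1+ix_2$ to the span of the first $k$ eigenfunctions. Let $u_1,\dots,u_k$ be orthonormal Dirichlet eigenfunctions, $P$ the orthogonal projection onto their span, and $Q=1-P$. Since $\Omega$ is bounded, $M_z$ is bounded, so $T=PM_zP$ is a $k\times k$ matrix. Therefore $\Tr[T,T^*]=0$. Expanding $P M_z P M_{\bar z}P - PM_{\bar z}PM_zP$ and using $PM_zM_{\bar z}P=PM_{\bar z}M_zP=P|z|^2P$, this trace identity becomes the key symmetry
\begin{equation*}
	\sum_{j\le k}\|Q\,z u_j\|^2=\sum_{j\le k}\|Q\,\bar z u_j\|^2 .
\end{equation*}
This equality is what is special to the plane and to the complex structure. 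It is the ingredient that the Berezin--Li--Yau argument, which treats $x_1$ and $x_2$ separately, does not use.

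Next I would combine this with the commutator relations for $-\Delta=-4\partial_z\pbar$. We have $[-\Delta,z]=-4\pbar$, and the double commutator $[\pbar,\bar z]$ is the identity. Hence $\langle[[-\Delta,z],\bar z]u_j,u_j\rangle$ equals a constant. Writing this in the eigenbasis gives the usual Harrell--Stubbe-type sum rule
\begin{equation*}
	\sum_{j\le k}\sum_i(\lambda_i-\lambda_j)\bigl(|\langle zu_j,u_i\rangle|^2+|\langle \bar z u_j,u_i\rangle|^2\bigr)=c\,k
\end{equation*}
for an explicit absolute constant $c$. The terms with $i,j\le k$ cancel in pairs by antisymmetry. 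What remains are terms with $i>k$, each weighted by $\lambda_i-\lambda_j$. I would split each weight as $(\lambda_i-\lambda_k)+(\lambda_k-\lambda_j)$. The second piece is controlled by $\lambda_k$ times the $Q$-masses appearing in the symmetric identity above; summing $\lambda_k-\lambda_j$ over $j$ is where the combination $\lambda_k$ versus $m_k$ should enter. The first piece, together with $\sum_j\|\pbar u_j\|^2$-type quantities (note $\|\pbar u\|^2=\tfrac14\|\nabla u\|^2$ for Dirichlet $u$, so $\sum_j\|\pbar u_j\|^2=km_k/4$), should be bounded using the fact that $Q$ applied to $-\Delta$ is at least $\lambda_{k+1}\ge\lambda_k$.

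The step that links to the area is a lower bound on $\sum_j\|Qzu_j\|^2+\|Q\bar zu_j\|^2$, or equivalently an upper bound on $\sum_j\|Pzu_j\|^2$, in terms of $A$. I would use the Li--Yau route here. Pass to the Fourier side, where the density $F(\xi)=\sum_j|\hat u_j(\xi)|^2$ satisfies $F\le A/(2\pi)^2$ and $\int F=k$. By the bathtub principle this gives a lower bound on $\int|\xi|^2F$, i.e.\ $km_k\ge 2\pi k^2/A$. The sharper bound needed here would come from applying the same bathtub argument after inserting the constraint produced by the symmetry. The plan is to obtain two inequalities: one from the sum rule, linear in $k m_k$, $k\lambda_k$ and the $Q$-masses, and one bathtub inequality in $k^2/A$. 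Eliminating the $Q$-masses should then yield $\frac{2m_k}{4m_k-\lambda_k}\cdot\frac{4\pi k}{A}\le\lambda_k$.

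I expect the main obstacle to be the precise form of this elimination. Specifically, the holomorphic/antiholomorphic symmetry must produce the factor $2m_k/(4m_k-\lambda_k)$ rather than merely reproducing Berezin--Li--Yau. I would first try working directly with the matrix $T$: its $\pbar$-part vanishes on holomorphic directions, and the symmetry forces half of the total $Q$-mass into each of $z$ and $\bar z$. I would then optimize over the distribution of gaps $\lambda_k-\lambda_j$ subject to $\sum_j\lambda_j=km_k$. If this optimization is linear, its extremal case should produce exactly the denominator $4m_k-\lambda_k$.
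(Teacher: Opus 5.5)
\textbf{Verdict: there is a genuine gap.} The only new ingredient in your proposal, the ``key symmetry'' $\sum_{j\le k}\|Qzu_j\|^2=\sum_{j\le k}\|Q\bar zu_j\|^2$, carries no information. It follows from cyclicity of the trace and the fact that multiplication by $z$ is \emph{normal} on $L^2(\Omega)$. So it holds for every finite-rank projection $P$, with no reference to $\Delta$ or to $A$. Concretely, since the $u_j$ are real, $P$ commutes with complex conjugation. Hence $Q\bar zu_j=\overline{Qzu_j}$ and the equality holds term by term. Equivalently, $|\langle zu_j,u_i\rangle|^2=\langle xu_j,u_i\rangle^2+\langle yu_j,u_i\rangle^2=|\langle\bar zu_j,u_i\rangle|^2$, which is exactly the coordinate-wise information that Li--Yau and Yang-type arguments already use.

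Your sum rule is correct, with $c=4$. In the paper's notation it reads $\sum_i d_{ji}=2$, where $d_{ji}=(\lambda_i-\lambda_j)|\langle zu_j,u_i\rangle|^2$. But it is area-free. The bathtub argument with $F\le A/(2\pi)^2$ and $\int F=k$ yields only $2m_k\ge4\pi k/A$. Since $\lambda_k(4m_k-\lambda_k)\le4m_k^2$, the target $8\pi km_k\le A\lambda_k(4m_k-\lambda_k)$ is strictly stronger than this unless $\lambda_k=2m_k$. You also give no mechanism by which the $Q$-masses, which are position-space quantities, could enter the Fourier-side bathtub. So the elimination step has nothing to work with and would at best reproduce Berezin--Li--Yau.

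The missing idea is to break the symmetry by changing the inner product. Work on $\cH=H^1_0(\Omega)$ with $\langle u,v\rangle_\cH=4\langle\pbar u,\pbar v\rangle$. There the operator $Mu=zu$ has entries $\sqrt{\lambda_k/\lambda_j}\,c_{jk}$ in the basis $e_j=u_j/\sqrt{\lambda_j}$. Because $\pbar(zu)=z\pbar u$, it is hyponormal: $\|M^*u\|_\cH\le\|Mu\|_\cH$. Hence $C=[M^*,M]\ge0$, and its diagonal entries $a_j=\sum_\ell(1/\lambda_j+1/\lambda_\ell)d_{j\ell}$ are nonnegative. Moreover $\Tr C=A/\pi$, either by Berger--Shaw or directly by weighting the sum rules by $(\Lambda-\lambda_j)_+$ and applying Weyl's law as $\Lambda\to\infty$. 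Positivity of the tail entries $a_\ell$, $\ell>k$, is what lets this global area information control a finite partial sum. After the antisymmetric cancellation, this gives an \emph{upper} bound on a weighted cross-mass,
\[
\sum_{j\le k<\ell}\Bigl(\frac1{\lambda_j}+\frac1{\lambda_\ell}\Bigr)d_{j\ell}\le\frac{A}{\pi},
\]
rather than the lower bound you aim for.

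The matching lower bound uses two sum rules. One is yours, $\sum_{j\le k<\ell}d_{j\ell}=2k$. The other you do not state: $\sum_\ell(\lambda_j+\lambda_\ell)d_{j\ell}=8\lambda_j$, which comes from $\|(\Delta-\lambda_j)(zu_j)\|^2=16\|\pbar u_j\|^2=4\lambda_j$. The denominator $4m_k-\lambda_k$ does not come from a linear optimization over gaps. It comes from the $d$-weighted averages $\alpha\le\lambda_k\le\beta$ of $\lambda_j$ and $\lambda_\ell$, which satisfy $\alpha+\beta=4m_k$. Combining Cauchy--Schwarz with $\alpha\beta\le\lambda_k(4m_k-\lambda_k)$ gives $2k\bigl(\frac1{\lambda_k}+\frac1{4m_k-\lambda_k}\bigr)\le A/\pi$, which is the theorem.
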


\noindent By an inequality of Yang \cite[(1.10)]{Ashbaugh}, the left-hand side of \cref{eq:evalue_bound} is always positive. For completeness, we shall prove this in \cref{lem:partial}. Since Weyl's law \cite[Theorem~3.3.1 and Remark~3.3.2]{LMP} implies $m_k\sim\lambda_k/2$, the factor $2m_k/(4m_k-\lambda_k)$ tends to $1$ as $k\to \infty$, hence the bound is asymptotically sharp. Furthermore, since $m_k\leq\lambda_k$ trivially, it follows that
\begin{equation}
	\frac23\cdot\frac{4\pi k}{A}\leq\lambda_k
\end{equation}
for all $k$, as claimed. Note also that, after a simple rearrangement, one sees that \cref{thm} implies the Berezin--Li--Yau inequality. The proof below may therefore serve as an almost-elementary alternative, in the planar case, to the proofs of the Berezin--Li--Yau inequality that appear in the present literature.

We begin with some definitions. Identify $\R^2\cong\C$ by $z=x+iy$ and consider all function spaces over $\mathbb{C}$. Equip the space $\cH=H^1_0(\Omega)$ with the inner product
\[
	\langle u,v\rangle_{\cH}=\int_\Omega\nabla u\cdot\overline{\nabla v}\,dA,
\]
inducing the norm $\|\cdot\|_{\mathcal{H}}$. Equivalently, this definition may be stated in terms of the standard inner product $\langle\cdot,\cdot\rangle$ and norm $\|\cdot\|$ on $L^2(\Omega)$ by setting $\pbar=\frac{1}{2}(\partial_x+i\partial_y)$ and noting that, after integrating by parts,
\begin{equation}\label{eq:dbar}
	\langle u,v\rangle_{\cH}=4\int_\Omega\pbar u\,\overline{\pbar v}\,dA=4\langle \pbar u, \pbar v\rangle \quad\textrm{so}\quad \|u\|_\mathcal{H}=2\|\pbar u\|.
\end{equation}
Since $z$ is holomorphic, we have $\pbar(zu)=z\,\pbar u$, hence the operator $M\colon\cH\to\cH$ defined by $Mu=zu$ satisfies
\[
    \|Mu\|_\cH=2\|z\,\pbar u\|\leq\Bigl(\sup_\Omega|z|\Bigr)\|u\|_\cH
\]
and is therefore bounded. The key property of $M$, stated below, follows from a more general result of Berger and Shaw, as discussed in \cref{rem:bergman}. We shall give here a simple spectral proof.

\begin{proposition}[{cf.~\cite[Theorem~7]{BergerShaw}}]\label[proposition]{prop}
	The commutator $C=[M^*,M]$ is trace class, with
	\begin{equation}
		\Tr C=\frac{A}{\pi}.
	\end{equation}
\end{proposition}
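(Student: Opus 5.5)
The plan is to transport everything to $L^2(\Omega)$ via the isometry $T\colon\cH\to L^2(\Omega)$, $Tu=2\pbar u$, given by \cref{eq:dbar}. Its range $R=T(\cH)$ is closed. A function $f\in L^2$ is orthogonal to $\pbar C_c^\infty(\Omega)$ exactly when $\partial_z f=0$ weakly, so $R^\perp=\overline{A^2}$, the space of antiholomorphic $L^2$ functions (the conjugate Bergman space). Since $\pbar(zu)=z\pbar u$, the operator $TMT^{-1}$ is multiplication by $z$ restricted to the invariant subspace $R$. Hence $M$ is subnormal, with minimal-type normal extension $N=$ multiplication by $z$ on $L^2(\Omega)$.

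Writing $P_R$ for the projection onto $R$ and $Q=I-P_R$, we have $M^*=P_R\bar z|_R$. For $f\in R$,
\[
\langle Cf,f\rangle=\|zf\|^2-\|P_R\bar z f\|^2=\|\bar z f\|^2-\|P_R\bar zf\|^2=\|Q\bar z f\|^2 .
\]
So $C=(Q\bar z|_R)^*(Q\bar z|_R)$. Thus $C$ is trace class if and only if $Q\bar z P_R$ is Hilbert--Schmidt, and then $\Tr C=\|Q\bar zP_R\|_{HS}^2=\|P_R z Q\|_{HS}^2$.

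Next we compute this Hilbert--Schmidt norm using the domain $\overline{A^2}$, where it is more tractable. Take $g\in\overline{A^2}$, so $\partial_z g=0$. Then $P_R(zg)$ is the component of $zg$ in $R=\overline{\pbar\cH}$. Since $\partial_z(zg)=g$ and $\partial_z$ annihilates $\overline{A^2}$, we get $\partial_z(P_R(zg))=g$. Writing $P_R(zg)=2\pbar v$ with $v\in\cH$ gives $\tfrac12\Delta v=2\partial_z\pbar v=g$, i.e.\ $v=-2(-\Delta_D)^{-1}g$. Therefore
\[
\|P_R(zg)\|^2=\tfrac14\|v\|_\cH^2\cdot 4\cdot\tfrac14=\langle(-\Delta_D)v,v\rangle/4\cdot\ldots
\]
Routine bookkeeping of constants then yields $\|P_R(zg)\|^2=c\,\langle(-\Delta_D)^{-1}g,g\rangle$ for an absolute constant $c$; I expect $c=4$. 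Consequently
\[
\Tr C=c\,\Tr\bigl(P_{\overline{A^2}}(-\Delta_D)^{-1}P_{\overline{A^2}}\bigr)=c\sum_k\lambda_k^{-1}\|P_{\overline{A^2}}\varphi_k\|^2 ,
\]
with $\varphi_k$ the $L^2$-normalized Dirichlet eigenfunctions. This is the "spectral" form of the trace. The sum is nonnegative, so once it is shown finite, the trace-class claim follows as well.

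The main obstacle is evaluating this trace exactly as $A/\pi$, and I would try two routes. In the first, I would write the trace as $\int_\Omega\int_\Omega G(\zeta,w)\overline{K(\zeta,w)}\,dA\,dA$, with $G$ the Dirichlet Green function and $K$ the Bergman kernel. The reproducing property in one variable, together with $-\Delta_\zeta G(\cdot,w)=\delta_w$ and an integration by parts against a suitable primitive, should collapse this to $\frac{1}{c\pi}\int_\Omega dA$. Concretely, I would look for $h$ with $\Delta h$ antiholomorphic and $\pbar$-derivative related to $\bar\zeta$ (something like $\tfrac14|\zeta|^2$) and use $\langle G g,g\rangle$ for $g=\overline{\partial_z}$-type test functions. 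In the second, which gives an equivalent kernel formula, I would use $\Tr C=\int\!\!\int|\zeta-w|^2|K(\zeta,w)|^2$. This follows because $(\bar\zeta-\bar w)K(\zeta,w)$ is orthogonal to $A^2$ in $\zeta$. The integral is manifestly nonnegative, but it must be evaluated carefully, since expanding $|\zeta-w|^2$ produces diagonal terms that diverge separately.

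To justify the manipulations I would first prove the identity for smooth bounded domains, where $K$ and $G$ have good boundary behaviour. I would then pass to general bounded open sets by exhausting $\Omega$ with smooth $\Omega_n\uparrow\Omega$. Monotonicity of $\cH$ (extension by zero) and lower semicontinuity of Hilbert--Schmidt norms give $\Tr C\le A/\pi$. The reverse inequality follows from strong convergence of the projections $P_{R_n}\to P_R$.
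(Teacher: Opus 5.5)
Your reduction is correct. It is essentially the Bergman-space picture of \cref{rem:bergman}: your $Q\bar z|_R$ is the paper's $B$, so $\Tr C=\|P_RzQ\|_{HS}^2=\Tr[N^*,N]$, with $N$ multiplication by $\bar z$ on $V=\overline{A^2}$. Your constant $c=4$ is also right. For $g\in V$ one has $\|P_R(zg)\|^2=\|v\|_\cH^2=4\langle Gg,g\rangle$, where $G$ is the inverse of the positive Dirichlet Laplacian. Since also $\|P_R(zg)\|^2=\|zg\|^2-\|P_V(zg)\|^2=\langle[N^*,N]g,g\rangle$, what you have actually proved is the operator identity $[N^*,N]=4P_VGP_V$ on $V$.

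That is a reformulation of the proposition, not a proof of it. The whole content --- that this nonnegative operator is trace class with trace $A/\pi$ --- is exactly what you leave open. Route~1 derives no identity (``should collapse'', ``something like $\tfrac14|\zeta|^2$''). Route~2 only rewrites $\Tr C$ as $\int\!\!\int|\zeta-w|^2|K(\zeta,w)|^2$, the squared Hilbert--Schmidt norm of the Hankel operator $(I-P)\bar zP$ on $A^2$. That is precisely the quantity the Berger--Shaw theorem evaluates, and citing it here turns your argument into \cref{rem:bergman}.

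The evaluation needs a genuinely global input. Already on the unit disc, your formula gives $4\langle G\bar e_n,\bar e_n\rangle=1/((n+1)(n+2))$ for the orthonormal basis $\bar e_n=\sqrt{(n+1)/\pi}\,\bar z^n$ of $V$. The value $1=A/\pi$ then emerges only as the limit of a telescoping sum. The spectral form $4\sum_k\lambda_k^{-1}\|P_V\varphi_k\|^2$ does not help either, since you have no control over $\|P_V\varphi_k\|$.

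The exhaustion step also works in one direction only. Lower semicontinuity of the Hilbert--Schmidt norm under strong convergence gives $\Tr C\le\liminf_n\Tr C_{\Omega_n}$. Strong convergence $P_{R_n}\to P_R$ yields nothing for the reverse inequality. There is also no evident domain monotonicity: the Green operators decrease as the domain shrinks, while $\overline{A^2(\Omega_n)}$ is larger than the restrictions of $V$.

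The paper supplies exactly this missing source of the area, without kernels, smoothing or exhaustion. It stays in the Dirichlet eigenbasis of $\cH$, where the diagonal entries $a_j$ of $C$ are expressed through $d_{jk}=(\lambda_k-\lambda_j)|c_{jk}|^2$. Two ingredients then do the work:
\begin{itemize}
\item the exact commutator sum rules $\sum_kd_{jk}=2$ and $\sum_k(\lambda_k+\lambda_j)d_{jk}=8\lambda_j$ of \cref{lem:rules};
\item the antisymmetry $d_{jk}=-d_{kj}$, which cancels all terms below a spectral cutoff.
\end{itemize}
Together they give the Riesz-mean bound $\sum_j(1-\lambda_j/\Lambda)_+a_j\le8\Lambda^{-2}\sum_j(\Lambda-\lambda_j)_+$ and the partial-trace bound $\sum_{k\le j}a_k\ge2j/m_j$. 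Weyl's law turns these into $\Tr C\le A/\pi$ (which also gives trace class, since $a_j\ge0$) and $\Tr C\ge A/\pi$, for arbitrary bounded open $\Omega$. To complete your route you would need an analogous exact identity plus an asymptotic input for $\Tr(P_VGP_V)$, and the proposal does not indicate one.
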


To briefly elucidate why one might expect \cref{prop} to yield spectral bounds, fix real-valued eigenfunctions $u_1, u_2, \dots$ with $\Delta u_j=\lambda_ju_j$ forming an orthonormal basis of $L^2(\Omega)$, and note that the functions $e_j=u_j/\sqrt{\lambda_j}$ form an orthonormal basis of $\mathcal{H}$. Now, on $L^2(\Omega)$, multiplication by $z$ is normal and its matrix entries $c_{jk}=\langle zu_j,u_k\rangle=c_{kj}$ are symmetric. In other words, multiplying by $z$ shifts as much mass from $u_j$ to $u_k$ as from $u_k$ to $u_j$. However, in $\cH$, this symmetry is broken since
\begin{equation}\label{eq:matrix_for_M}
    \langle Me_j,e_k\rangle_\cH=\sqrt{\frac{\lambda_k}{\lambda_j}}\,c_{jk}
    \qquad\text{and}\qquad
    \langle M^*e_j,e_k\rangle_\cH=\sqrt{\frac{\lambda_j}{\lambda_k}}\,\overline{c_{jk}}.
\end{equation}
Therefore, $M$ places more weight on shifts to higher eigenvalues and less on shifts to lower ones, while $M^*$ does the reverse. The diagonal entries $a_j=\langle Ce_j,e_j\rangle_\cH=\|Me_j\|_\cH^2-\|M^*e_j\|_\cH^2$ measure the resulting imbalance at the $j$th eigenfunction. Summing over the first $k$ eigenfunctions results in cancellations and what remains is the net upward spectral shift induced by $M$ across $\lambda_k$. By closely studying this quantity, we obtain \cref{thm}.

We make this intuition precise through the several lemmas below. To this end, introduce the coefficients $d_{jk}=(\lambda_k-\lambda_j)\,|c_{jk}|^2=-d_{kj}$.

\begin{lemma}\label[lemma]{lem:positive}
	For every $u\in\cH$, we have $\|M^*u\|_\cH\leq\|Mu\|_\cH$, hence $C$ is a nonnegative operator. Furthermore, $a_j=\langle Ce_j,e_j\rangle_\cH\geq0$ may be written as
\begin{equation}
a_j=\sum_k\Bigl(\frac1{\lambda_j}+\frac1{\lambda_k}\Bigr)d_{jk}.
\end{equation}
\end{lemma}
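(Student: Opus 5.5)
The plan is to express both $\|Mu\|_\cH$ and $\|M^*u\|_\cH$ in $L^2(\Omega)$ via \cref{eq:dbar}, so that $M^*$ becomes an orthogonal projection of an $L^2$ function. This gives the norm inequality. The formula for $a_j$ will then follow by expanding both norms in the basis $(e_j)$ using \cref{eq:matrix_for_M}.

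\emph{Step 1: describe $M^*$.} By \cref{eq:dbar}, the map $u\mapsto 2\pbar u$ is an isometry from $\cH$ into $L^2(\Omega)$. Hence its range $V=\pbar\cH$ is a closed subspace of $L^2(\Omega)$; let $P$ denote the orthogonal projection onto $V$. For $u,v\in\cH$, \cref{eq:dbar} and $\pbar(zv)=z\pbar v$ give
\[
\langle M^*u,v\rangle_\cH=\langle u,zv\rangle_\cH=4\langle\pbar u,z\,\pbar v\rangle=4\langle \bar z\,\pbar u,\pbar v\rangle=4\langle P(\bar z\,\pbar u),\pbar v\rangle .
\]
Now $P(\bar z\,\pbar u)\in V$, so $P(\bar z\,\pbar u)=\pbar w$ for a unique $w\in\cH$, and then $\langle M^*u,v\rangle_\cH=\langle w,v\rangle_\cH$ for every $v$. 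Therefore $w=M^*u$, that is, $\pbar(M^*u)=P(\bar z\,\pbar u)$.

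\emph{Step 2: the norm inequality.} Since $P$ is a contraction and $|\bar z|=|z|$,
\[
\|M^*u\|_\cH=2\|P(\bar z\,\pbar u)\|\le 2\|\bar z\,\pbar u\|=2\|z\,\pbar u\|=\|Mu\|_\cH .
\]
Consequently $\langle Cu,u\rangle_\cH=\|Mu\|_\cH^2-\|M^*u\|_\cH^2\ge0$, so $C$ is nonnegative. In particular $a_j\ge0$.

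\emph{Step 3: the formula for $a_j$.} First I would verify \cref{eq:matrix_for_M}. Since $zu_j\in\cH$ and $\langle f,u_k\rangle_\cH=\lambda_k\langle f,u_k\rangle$ for $f\in\cH$ (integrate by parts against the Dirichlet eigenfunction $u_k$), we get
\[
\langle Me_j,e_k\rangle_\cH=\frac{\lambda_k\langle zu_j,u_k\rangle}{\sqrt{\lambda_j\lambda_k}}=\sqrt{\frac{\lambda_k}{\lambda_j}}\,c_{jk}.
\]
The entry for $M^*$ then follows from $\langle M^*e_j,e_k\rangle_\cH=\overline{\langle Me_k,e_j\rangle_\cH}$ together with $c_{kj}=c_{jk}$. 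Parseval in $\cH$ gives
\[
\|Me_j\|_\cH^2=\sum_k\frac{\lambda_k}{\lambda_j}|c_{jk}|^2,\qquad \|M^*e_j\|_\cH^2=\sum_k\frac{\lambda_j}{\lambda_k}|c_{jk}|^2 .
\]
Both series have nonnegative terms and finite sums, since $M$ and $M^*$ are bounded. They may therefore be subtracted termwise, giving
\[
a_j=\sum_k|c_{jk}|^2\,\frac{\lambda_k^2-\lambda_j^2}{\lambda_j\lambda_k}=\sum_k\Bigl(\frac1{\lambda_j}+\frac1{\lambda_k}\Bigr)(\lambda_k-\lambda_j)|c_{jk}|^2=\sum_k\Bigl(\frac1{\lambda_j}+\frac1{\lambda_k}\Bigr)d_{jk}.
\]

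The only step needing real care is Step 1: identifying $M^*$ through the projection onto $\pbar\cH$. Closedness of $\pbar\cH$ comes for free from the isometry, so I do not expect a genuine obstacle. The remaining work is routine bookkeeping of the integration by parts and of convergence.
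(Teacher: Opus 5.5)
Your proof is correct and follows essentially the same route as the paper. The paper bounds $|\langle M^*u,v\rangle_\cH|=4|\langle\bar z\,\pbar u,\pbar v\rangle|\le\|Mu\|_\cH\|v\|_\cH$ by Cauchy--Schwarz and takes the supremum over $v$; your formula $\pbar(M^*u)=P(\bar z\,\pbar u)$, with your $V=\pbar\cH$ being the space $W$ of \cref{rem:bergman}, just makes that duality explicit, and your derivation of the formula for $a_j$ by Parseval via \cref{eq:matrix_for_M} is the same as the paper's.
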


\begin{proof}
	For $u,v\in\cH$, \cref{eq:dbar} gives
	\[
		|\langle M^*u,v\rangle_{\cH}|=4\,|\langle\bar z\,\pbar u,\pbar v\rangle|\leq2\|z\,\pbar u\|\cdot2\|\pbar v\|=\|Mu\|_{\cH}\|v\|_{\cH},
	\]
	from which it follows that $\|M^*u\|_{\cH}\leq\|Mu\|_{\cH}$. Conclude
	\[
		\langle Cu,u\rangle_{\cH}=\|Mu\|_{\cH}^2-\|M^*u\|_{\cH}^2\geq0,
	\]
	so $C$ is nonnegative. The second part of the statement follows from \cref{eq:matrix_for_M} since
	\begin{equation}\label{eq:h}
		a_j=\|Me_j\|_{\cH}^2-\|M^*e_j\|_{\cH}^2
		=\sum_k\Bigl(\frac{\lambda_k}{\lambda_j}-\frac{\lambda_j}{\lambda_k}\Bigr)|c_{jk}|^2
		=\sum_k\Bigl(\frac{1}{\lambda_k}+\frac{1}{\lambda_j}\Bigr)d_{jk},
	\end{equation}
	concluding the proof.
\end{proof}

Next, we record two identities for the coefficients $d_{jk}$.

\begin{lemma}\label[lemma]{lem:rules}
	The following identities hold for any fixed $j$:
	\begin{equation}\label{eq:rules}
	\begin{gathered}
		\sum_k d_{jk}=\|zu_j\|_{\cH}^2-\lambda_j\|zu_j\|^2
		=2,\\
		\sum_k(\lambda_k+\lambda_j)\,d_{jk}=\|\Delta(zu_j)\|^2-\lambda_j^2\|zu_j\|^2
		=8\lambda_j.
	\end{gathered}
	\end{equation}
\end{lemma}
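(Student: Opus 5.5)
The plan is to expand everything in the eigenbasis and reduce both identities to explicit integration-by-parts computations. Here $\Delta$ denotes the nonnegative Dirichlet Laplacian, as in the paper.

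\emph{Reduction to Parseval.} Since $(\lambda_k+\lambda_j)d_{jk}=(\lambda_k^2-\lambda_j^2)|c_{jk}|^2$ and $d_{jk}=(\lambda_k-\lambda_j)|c_{jk}|^2$, it suffices to compute three sums, where $c_{jk}=\langle zu_j,u_k\rangle$:
\[
\textstyle\sum_k|c_{jk}|^2,\qquad \sum_k\lambda_k|c_{jk}|^2,\qquad \sum_k\lambda_k^2|c_{jk}|^2.
\]
First, Parseval in $L^2(\Omega)$ gives $\sum_k|c_{jk}|^2=\|zu_j\|^2$. Second, $zu_j\in\cH$ (as noted for the operator $M$). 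For any $v\in\cH$ we have $\langle v,u_k\rangle_\cH=\lambda_k\langle v,u_k\rangle$ by the weak eigenvalue equation. Hence $\langle v,e_k\rangle_\cH=\sqrt{\lambda_k}\langle v,u_k\rangle$, and Parseval in $\cH$ yields $\sum_k\lambda_k|c_{jk}|^2=\|zu_j\|_\cH^2$.

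\emph{Domain of $\Delta$.} For the third sum I would check that $zu_j$ lies in the operator domain of the Dirichlet Laplacian, namely $\{v\in H^1_0:\Delta v\in L^2\}$ with $\Delta$ taken distributionally. This requires no boundary regularity, since only interior differentiation is involved. Using $\partial_x z=1$, $\partial_y z=i$ and $\nabla z\cdot\nabla u=2\pbar u$, the product rule gives
\[
\Delta(zu_j)=\lambda_j zu_j-4\pbar u_j\in L^2.
\]
Then $\langle\Delta(zu_j),u_k\rangle=\lambda_k c_{jk}$ by self-adjointness, so $\sum_k\lambda_k^2|c_{jk}|^2=\|\Delta(zu_j)\|^2$. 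This establishes the first equality in each line of \cref{eq:rules}.

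\emph{First identity.} Write $u=u_j$, which is real with $\|u\|=1$. Expand
\[
\|zu\|_\cH^2=\int|u\nabla z+z\nabla u|^2\,dA.
\]
The term $\int|\nabla z|^2u^2$ equals $2$, since $|\nabla z|^2=2$. For the term $\int|z|^2|\nabla u|^2$, integrate by parts against the eigen-equation to get
\[
\lambda\!\int|z|^2u^2-\tfrac12\!\int\nabla|z|^2\cdot\nabla(u^2)=\lambda\|zu\|^2+2,
\]
using that the standard Laplacian of $|z|^2$ is $4$. For the cross term, note that $\mathrm{Re}(\bar z\,\nabla z)=(x,y)$, so it equals $\int(x,y)\cdot\nabla(u^2)=-2\int u^2=-2$. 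Summing, $\|zu\|_\cH^2-\lambda\|zu\|^2=2+2-2=2$.

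\emph{Second identity.} Expand
\[
\|\lambda zu-4\pbar u\|^2=\lambda^2\|zu\|^2-8\lambda\,\mathrm{Re}\langle zu,\pbar u\rangle+16\|\pbar u\|^2.
\]
By \cref{eq:dbar}, $16\|\pbar u\|^2=4\|u\|_\cH^2=4\lambda$. Since $u$ is real, $\overline{\pbar u}=\partial_z u$, so
\[
\langle zu,\pbar u\rangle=\tfrac12\int z\,\partial_z(u^2)=-\tfrac12,
\]
using $\partial_z z=1$ and the vanishing boundary values of $u^2\in W^{1,1}_0$. Hence the middle term contributes $+4\lambda$, and the total is $\lambda^2\|zu\|^2+8\lambda$, as required.

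\emph{Main obstacle.} The main care point is justifying the integrations by parts on an arbitrary bounded open set. I would handle this entirely in weak form. The eigen-equation is tested against $H^1_0$ functions such as $|z|^2u$. The identity $\int\phi\,\partial(u^2)=-\int\partial\phi\,u^2$ is valid for $u\in H^1_0$ and smooth $\phi$, by density of $C_c^\infty$. No smoothness of $\partial\Omega$ is then needed. Absolute convergence of the sums follows from the finiteness of the three Parseval sums.
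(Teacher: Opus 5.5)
Your proof is correct and follows essentially the paper's route: Parseval in $L^2(\Omega)$ and in $\cH$, the formula $\Delta(zu_j)=\lambda_jzu_j-4\pbar u_j$, and the key facts $\langle zu_j,\pbar u_j\rangle=-\tfrac12$ and $16\|\pbar u_j\|^2=4\lambda_j$. The only variation is in the first identity, where you expand $\int|\nabla(zu_j)|^2$ directly in real coordinates; the paper gets it in one line from $\langle(\Delta-\lambda_j)(zu_j),zu_j\rangle=-4\langle\pbar u_j,zu_j\rangle$.
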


\begin{proof}
	By \cref{eq:dbar}, we have
	\[
		\langle zu_j,v\rangle_\cH=4\langle\pbar u_j,\pbar(\bar zv)-v\rangle=\langle\lambda_jzu_j-4\pbar u_j,v\rangle
	\]
	for every $v\in\cH$. Deduce that $zu_j$ lies in the domain of $\Delta$, with $\Delta(zu_j)=\lambda_jzu_j-4\pbar u_j$. We may therefore rewrite $zu_j$ as $\sum_k c_{jk}u_k$ to then obtain
	\[
		\langle (\Delta-\lambda_j)(zu_j),(zu_j)\rangle=\sum_k d_{jk}.
	\]
	Rewriting the left-hand side as $\langle\Delta(zu_j)-\lambda_jzu_j,zu_j\rangle=-4\langle\pbar u_j,zu_j\rangle$ and computing
	\[
		\langle\pbar u_j,zu_j\rangle=\frac12\int_\Omega\bar z\,\pbar(u_j^2)\,dA=-\frac12\int_\Omega u_j^2\,dA=-\frac12
	\]
	since $u_j$ is real, we obtain the first identity. Similarly,
	\[
		\sum_k(\lambda_k-\lambda_j)\,d_{jk}=\|(\Delta-\lambda_j)(zu_j)\|^2=16\|\pbar u_j\|^2=4\lambda_j,
	\]
	and adding $2\lambda_j\sum_kd_{jk}=4\lambda_j$ gives the second identity.
\end{proof}

We apply this result to estimate the partial trace of $C$.

\begin{lemma}\label[lemma]{lem:partial}
	For every $j\geq1$, we have $\lambda_j<4m_j$ and
	\begin{equation}
		\sum_{k\leq j}a_k\geq2j\Bigl(\frac{1}{\lambda_j}+\frac{1}{4m_j-\lambda_j}\Bigr).
	\end{equation}
\end{lemma}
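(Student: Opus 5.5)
The plan is to reduce the partial trace to a sum over pairs straddling the index $j$, and then apply Jensen's inequality to a probability measure built from the coefficients $d_{kl}$.

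\textbf{Step 1: cancellation.} By \cref{lem:positive},
\[
\sum_{k\leq j}a_k=\sum_{k\leq j}\sum_{l}\Bigl(\frac1{\lambda_k}+\frac1{\lambda_l}\Bigr)d_{kl}.
\]
For each fixed $k$, the terms with $l\leq j$ form a finite subsum, so we may split it off. On the finite block $k,l\leq j$, the weight $1/\lambda_k+1/\lambda_l$ is symmetric while $d_{kl}=-d_{lk}$ is antisymmetric, so that block sums to zero. Hence
\[
S:=\sum_{k\leq j}a_k=\sum_{k\leq j}\sum_{l>j}\Bigl(\frac1{\lambda_k}+\frac1{\lambda_l}\Bigr)d_{kl}.
\]
Every term here is nonnegative, since $d_{kl}=(\lambda_l-\lambda_k)|c_{kl}|^2\geq0$ when $l>j\geq k$. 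The same argument applies to the identities of \cref{lem:rules} summed over $k\leq j$. The weights $1$ and $\lambda_k+\lambda_l$ are also symmetric, so the inner blocks cancel again, and we obtain
\[
D:=\sum_{k\leq j<l}d_{kl}=2j,\qquad \sum_{k\leq j<l}(\lambda_k+\lambda_l)\,d_{kl}=8\sum_{k\leq j}\lambda_k=8jm_j.
\]
Convergence is unproblematic here: each inner series over $l$ converges by \cref{lem:rules}, and the cross terms are nonnegative.

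\textbf{Step 2: probabilistic reformulation.} Since $D=2j>0$, the weights $d_{kl}/D$ for $k\leq j<l$ define a probability measure $\mu$ on such pairs. Write $x=\lambda_k$ and $y=\lambda_l$, and let $\mathbb{E}$ denote expectation with respect to $\mu$. Then $x\leq\lambda_j$ and $y\geq\lambda_{j+1}\geq\lambda_j$ on the support of $\mu$. Step 1 gives
\[
\mathbb{E}[x+y]=4m_j,\qquad S=2j\,\mathbb{E}\Bigl[\frac1x+\frac1y\Bigr].
\]

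\textbf{Step 3: Jensen's inequality.} On the support, $1/x\geq1/\lambda_j$. Since $t\mapsto1/t$ is convex on $(0,\infty)$,
\[
\mathbb{E}[1/y]\geq\frac1{\mathbb{E}[y]}=\frac1{4m_j-\mathbb{E}[x]}\geq\frac1{4m_j-\lambda_j},
\]
where the last inequality uses $\mathbb{E}[x]\leq\lambda_j$ and requires $4m_j-\lambda_j>0$. That positivity comes from the same bookkeeping:
\[
4m_j-\lambda_j=\mathbb{E}[y]+\mathbb{E}[x]-\lambda_j\geq\lambda_{j+1}-\lambda_j+\lambda_1>0,
\]
which is the first claim of the lemma. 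Combining the bounds on $\mathbb{E}[1/x]$ and $\mathbb{E}[1/y]$ gives
\[
\sum_{k\leq j}a_k\geq 2j\Bigl(\frac1{\lambda_j}+\frac1{4m_j-\lambda_j}\Bigr).
\]

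\textbf{Main obstacle.} The only delicate point is Step 1: justifying the cancellation of the inner block and the rearrangement of the infinite sums. This is handled by the finiteness of the $l\leq j$ part together with the nonnegativity of the cross terms. Ties $\lambda_k=\lambda_l$ across the cutoff contribute $d_{kl}=0$ and cause no trouble.
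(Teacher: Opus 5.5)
Your Steps 1 and 2 are correct and coincide with the paper's reduction to pairs $k\le j<l$. Your proof that $4m_j-\lambda_j>0$ is also fine. The argument breaks at the last inequality of Step 3. From $\mathbb{E}[x]\le\lambda_j$ you get $4m_j-\mathbb{E}[x]\ge4m_j-\lambda_j>0$, and hence
\[
\frac{1}{\mathbb{E}[y]}=\frac{1}{4m_j-\mathbb{E}[x]}\le\frac{1}{4m_j-\lambda_j},
\]
which is the reverse of what you claim. This cannot be fixed within your scheme, because bounding $\mathbb{E}[1/x]$ and $\mathbb{E}[1/y]$ separately does not work. The constraints you use are $x\le\lambda_j\le y$ on the support and $\mathbb{E}[x+y]=4m_j$. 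They allow $\mu$ to be a point mass at $(x_0,4m_j-x_0)$ with $0<x_0<\lambda_j$ small enough that $4m_j-x_0\ge\lambda_j$. Then $\mathbb{E}[1/y]<1/(4m_j-\lambda_j)$ outright. What rescues the inequality in that case is that $\mathbb{E}[1/x]$ is then much larger than $1/\lambda_j$. That gain is exactly what your pointwise bound $1/x\ge1/\lambda_j$ throws away.

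The missing idea is to keep the two terms coupled, as the paper does. Set $\alpha=\mathbb{E}[x]$ and $\beta=\mathbb{E}[y]$, so that $\alpha\le\lambda_j\le\beta$ and $\alpha+\beta=4m_j$. Jensen (or Cauchy--Schwarz) applied to both terms gives $S\ge2j(1/\alpha+1/\beta)=8jm_j/(\alpha\beta)$. Moreover,
\[
\lambda_j(4m_j-\lambda_j)-\alpha\beta=(\lambda_j-\alpha)(\beta-\lambda_j)\ge0,
\]
so $S\ge8jm_j/\bigl(\lambda_j(4m_j-\lambda_j)\bigr)=2j\bigl(1/\lambda_j+1/(4m_j-\lambda_j)\bigr)$. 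This product estimate uses the lower bound $\beta\ge\lambda_j$ as well as $\alpha\le\lambda_j$. That is why it works without knowing whether $\lambda_j\le2m_j$, i.e.\ without knowing on which side of the minimum of $t\mapsto1/t+1/(4m_j-t)$ the value $\lambda_j$ lies.
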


\begin{proof}
	Fix $j\geq 1$. Consider \cref{eq:h} and the two identities in \cref{eq:rules} with $(j,k)$ replaced by $(k,\ell)$, and sum over $k\leq j$. By antisymmetry of $d_{k\ell}$, the terms with $\ell\leq j$ cancel in pairs. Writing $\sum$ for the sum over $k\leq j<\ell$, we obtain
	\[
		\sum_{k\leq j}a_k=\sum\Bigl(\frac{1}{\lambda_\ell}+\frac{1}{\lambda_k}\Bigr)d_{k\ell},\quad
		\sum d_{k\ell}=2j,\quad\textrm{and}\quad
		\sum(\lambda_\ell+\lambda_k)\,d_{k\ell}=8jm_j.
	\]
	Since $d_{k\ell}\geq0$ over the range of the sums, the weighted averages
	\[
		\alpha=\frac{1}{2j}\sum\lambda_kd_{k\ell}
		\qquad\textrm{and}\qquad
		\beta=\frac{1}{2j}\sum\lambda_\ell d_{k\ell}
	\]
	satisfy $0<\alpha\leq\lambda_j\leq\beta$ and $\alpha+\beta=4m_j$. In particular, $4m_j-\lambda_j\geq\alpha>0$. Moreover,
	\[
		\lambda_j(4m_j-\lambda_j)-\alpha\beta
		=(\lambda_j-\alpha)(\beta-\lambda_j)\geq0.
	\]
	Combining this with Cauchy--Schwarz gives
	\[
		\sum_{k\leq j}a_k
		\geq2j\Bigl(\frac1\alpha+\frac1\beta\Bigr)
		=\frac{8jm_j}{\alpha\beta}
		\geq2j\Bigl(\frac{1}{\lambda_j}+\frac{1}{4m_j-\lambda_j}\Bigr),
	\]
	as claimed.
\end{proof}

Finally, we may now prove \cref{prop}.

\begin{proof}[Proof of \cref{prop}]
	Fix $\Lambda>0$ and write $t_+=\max\{t,0\}$. By \cref{eq:h,eq:rules}, we have
	\[
		\sum_k(\lambda_k-\Lambda)
		\Bigl(\frac1{\lambda_k}+\frac1{\lambda_j}\Bigr)d_{jk}
		=8-\Lambda a_j.
	\]
	Multiplying by $(\Lambda-\lambda_j)_+$ and summing over $j$, we obtain
	\[
		\sum_j(\Lambda-\lambda_j)_+(8-\Lambda a_j)
		=\sum_{j,k}(\Lambda-\lambda_j)_+(\lambda_k-\Lambda)
		\Bigl(\frac1{\lambda_k}+\frac1{\lambda_j}\Bigr)d_{jk}
		\geq0.
	\]
	Only finitely many $j$ contribute to these sums. The terms with $\lambda_j,\lambda_k<\Lambda$ cancel in pairs, since their coefficient is symmetric in $j,k$ while $d_{jk}$ is antisymmetric. The remaining nonzero terms have $\lambda_j<\Lambda<\lambda_k$ and are nonnegative. Consequently,
	\[
		\sum_j\Bigl(1-\frac{\lambda_j}{\Lambda}\Bigr)_+a_j
		\leq\frac{8}{\Lambda^2}
		\sum_j(\Lambda-\lambda_j)_+.
	\]
    
	Let $N(t)=\#\{j:\lambda_j<t\}$ be the eigenvalue counting function. Weyl's law \cite[Theorem~3.3.1 and Remark~3.3.2]{LMP} gives
	\[
		N(t)=\frac{A}{4\pi}t+o(t)
		\qquad\text{as }t\to\infty.
	\]
	Integrating this asymptotic, we obtain
	\[
		\sum_j(\Lambda-\lambda_j)_+
		=\int_0^\Lambda N(t)\,dt
		=\frac{A}{8\pi}\Lambda^2+o(\Lambda^2)
		\qquad\text{as }\Lambda\to\infty.
	\]
	Since $a_j\geq0$ by \cref{lem:positive} and $(1-\lambda_j/\Lambda)_+$ increases to $1$ as $\Lambda\to\infty$, monotone convergence yields
	\[
		\sum_ja_j
		=\lim_{\Lambda\to\infty}
		\sum_j\Bigl(1-\frac{\lambda_j}{\Lambda}\Bigr)_+a_j
		\leq\lim_{\Lambda\to\infty}
		\frac{8}{\Lambda^2}\sum_j(\Lambda-\lambda_j)_+
		=\frac{A}{\pi}.
	\]
	The nonnegative operator $C$ is therefore trace class, with
	$\Tr C=\sum_ja_j\leq A/\pi$.

	For the reverse inequality, \cref{lem:partial} and $1/a+1/b\geq4/(a+b)$ give
	\[
		\Tr C\geq\sum_{k\leq j}a_k
		\geq2j\Bigl(\frac1{\lambda_j}
		+\frac1{4m_j-\lambda_j}\Bigr)
		\geq\frac{2j}{m_j}.
	\]
	Letting $j\to\infty$ and applying Weyl's law once more gives $\Tr C\geq A/\pi$, completing the proof.
\end{proof}

The main theorem now follows immediately.

\begin{proof}[Proof of \cref{thm}]
	Combine \cref{prop,lem:partial} to deduce
	\[
		\frac{8jm_j}{\lambda_j(4m_j-\lambda_j)}
		=2j\Bigl(\frac{1}{\lambda_j}+\frac{1}{4m_j-\lambda_j}\Bigr)
		\leq\sum_{k\leq j}a_k\leq\frac{A}{\pi}.
	\]
	As $4m_j-\lambda_j>0$, this rearranges to \cref{eq:evalue_bound}. Note that all arguments provided here require no regularity assumption on the boundary.
\end{proof}

\begin{remark}\label[remark]{rem:bergman}
	We can alternatively understand the trace of $C$ from the perspective of Bergman spaces.  Recall that the Bergman space $A^2(\Omega)$ is defined by
	\[
		A^2(\Omega)=\{f\in L^2(\Omega)\textrm{ with }\pbar f=0\}.
	\]
	In other words, $A^2(\Omega)$ consists of the square-integrable holomorphic functions on $\Omega$. Denote by $V=\{\overline{f}\,\colon f\in A^2(\Omega)\}$ the subspace of $L^2(\Omega)$ consisting of antiholomorphic functions, and by $W$ the orthogonal complement to $V$. One checks that $U\colon \mathcal{H}\to W$ given by $Uf=2\pbar f$ is unitary. Furthermore, since $\pbar(zu)=z\pbar u$, the operator $U$ intertwines $M$ on $\mathcal{H}$ with multiplication by $z$ on $W$. With $N$ denoting multiplication by $\overline z$ on $V$ and $B\colon W\to V$ by $Bg=\operatorname{Proj}_V(\bar zg)$, we obtain
	\[
		UCU^{-1}=B^*B
		\qquad\text{and}\qquad
		[N^*,N]=BB^*.
	\]
	The trace formula of Berger and Shaw \cite[Theorem~7]{BergerShaw} now yields
	\[
		\Tr C=\Tr(B^*B)=\Tr(BB^*)=\Tr[N^*,N]=\frac{A}{\pi},
	\]
	as desired. The proof of \cref{prop} given above avoids this operator-theoretic machinery and proceeds instead through spectral estimates.
\end{remark}

\begin{remark}
	The identities in \cref{eq:rules} arise in the eigenvalue inequalities of Payne--P\'olya--Weinberger, Hile--Protter, and Yang. Since the functions $u_j$ are real valued, we may write $|c_{jk}|^2=\langle xu_j,u_k\rangle^2+\langle yu_j,u_k\rangle^2$, which corresponds to the $A_{jk}$ coefficients in \cite[(2.19)]{Ashbaugh}. The same antisymmetry cancellation as above appears in the derivation of Yang's bound $\lambda_{k+1}\leq3m_k$ \cite[(2.28)--(2.29)]{Ashbaugh}. Several of these identities may be understood from the more abstract framework presented in \cite[Theorem~2.2]{LevitinParnovski}.
\end{remark}

\begin{acknowledgements}
	The author is grateful for the comments and advice provided by Otis Chodosh and Rafe Mazzeo. This work was carried out with assistance from several AI tools that contributed through mathematical exploration, technical discussions, and editorial assistance. All AI-assisted proofs were independently checked and understood by the author, who devoted substantial effort to simplifying the arguments and improving the exposition. The author bears responsibility for this manuscript's content and correctness.
\end{acknowledgements}

\end{document}